\newtheorem{theorem}{\rm\bf Theorem}[section]
\newtheorem*{theorem 1}{\rm\bf Proposition 1}
\newtheorem*{theorem 2}{\rm\bf Proposition 2}
\newtheorem*{mainlemma*}{\rm\bf Main Lemma}
\theoremstyle{definition}
\newtheorem{definition}[theorem]{\rm\bf Definition}
\theoremstyle{remark}
\newtheorem{remark}[theorem]{\rm\bf Remark}
\newtheorem{example}[theorem]{\rm\bf Example}
\def\scal#1#2{\langle #1, #2\rangle}
\def\R#1{\mathbb{R}^{#1}}
\def\TOP{\top}
\DeclareMathOperator{\re}{\mathrm{Re}}
\DeclareMathOperator{\trace}{\mathrm{tr}}
\def\Nabla{\overline{\,\nabla}}
\begin{document}
\author{Tkachev Vladimir G.}
\title{A note on isoparametric polynomials}
\address{V. G. Tkachev\\
    Link\"oping University, Department of Mathematics, SE-581 83          }
\email{tkatchev@kth.se}

\begin{abstract}
We show that any homogeneous polynomial solution of $|\nabla F(x)|^2=m^2|x|^{2m-2}$, $m\ge 1$, is either  a radially symmetric polynomial $F(x)=\pm |x|^{m}$ (for even $m$'s) or it is a composition of a Chebychev polynomial and a Cartan-M\"unzner polynomial.
\end{abstract}
\maketitle

\section{Introduction}

In this paper we study homogeneous polynomials $F(x)$ of degree $m\ge1$ in  $x\in \R{n}$ satisfying the following  equation:
\begin{equation}\label{Munz1}
|\nabla F(x)|^2=m^2|x|^{2m-2}, \quad m\in \mathbb{N}=\{1,2,3,\ldots\}.
\end{equation}
We make no distinction between two solutions $F_1$ and $F_2$ which differ in the sign, and write in such a  case $F_1\cong F_2$.

For any even $m\ge 2$ there exists an obvious solution $F(x)=|x|^m$. Despite the fact that the right hand side of (\ref{Munz1}) is radially symmetric, there are plenty other, non-radially symmetric polynomial solutions. For instance, it is easily shown that for $m=2$ any quadratic solution of (\ref{Munz1}) is of the form
\begin{equation}\label{g2}
F(x)=|V(x)|^2-|V^\bot (x)|^2=2|V(x)|^2-|x|^2
\end{equation}
for some subspace $V\subset \R{n}$, where $V(x)$ denotes the orthogonal projection of $x$ onto $V$. This family is well-known by its connection to the so-called isoparametric hypersurfaces in the Euclidean spheres which are crucial for the further classification. We summarize  below some facts and definitions from the isoparametric hypersurfaces theory that will be used throughout the paper.

Recall that a hypersurface of the unit sphere $S^{n-1}\subset\R{n}$ is called \textit{isoparametric} if it has constant principal curvatures. More precisely, if $2\le \dim V\le n-2$ then any level set $F^{-1}(t)\cap S^{n-1}$, $t\in (-1,1)$, is a smooth hypersurface  in $S^{n-1}$ with exactly two distinct constant principal curvatures. It was Cartan \cite{Cartan39MZ} who was first to show that any isoparametric hypersurface in $S^{n-1}$ with \textit{three} distinct principal curvatures is the restriction on the unit sphere of a level set of a \textit{harmonic} cubic solution of (\ref{Munz1}) for $m=3$, and vice versa. He also  showed that there exist (up to rotation in $\R{n}$) exactly four harmonic cubics satisfying (\ref{Munz2}), each one in dimensions $n=5,8,14$ and $26$, and given explicitly by
\begin{equation}\label{CartanFormula}
\begin{split}
F(x) =x_{n}^3&+\frac{3}{2}x_{n}(|z_1|^2+|z_2|^2-2|z_3|^2-2x_{n-1}^2)\\
&+\frac{3\sqrt{3}}{2}x_{n-1}(|z_2|^2-|z_1|^2)+{3\sqrt{3}}\re (z_1z_2z_3),
\end{split}
\end{equation}
where $z_k=(x_{kd-d+1},\ldots,x_{kd})\in \R{d}=\mathbb{F}_d$, $k=1,2,3$, and  $\mathbb{F}_d$ denotes the real division algebra of dimension $d$: $\mathbb{F}_1=\mathbb{R}$ (the reals), $\mathbb{F}_2=\mathbb{C}$ (the complexes), $\mathbb{F}_4=\mathbb{H}$ (the quaternions) and $\mathbb{F}_8=\mathbb{O}$ (the octonions). In the middle of 1970's, F.~M\"unzner \cite{Mun1} generalized the Cartan results and proved that the number of distinct principal curvatures of an isoparametric hypersurface in a unit sphere can only be $m=1,2,3,4$ or $6$, and that any such a hypersurface is always obtained as the restriction on the unit sphere of a level set of some homogeneous polynomial solution of (\ref{Munz1}) satisfying additionally to
\begin{equation}\label{Munz2}
\Delta F(x)=c|x|^{m-2},
\end{equation}
with $c\in \R{}$ ($c=0$ when $m$ is odd) such that
\begin{equation}\label{pm}
m_{\pm}=\frac{n-2}{m}\pm\frac{c}{m^2}\in \mathbb{N}.
\end{equation}
Moreover, M\"unzner established that all the level sets $F^{-1}(t)\cap S^{n-1}$, $t\in (-1,1)$, are connected isoparametric hypersurfaces whose principal curvatures have multiplicities either $m_+$ or $m_-$.

\begin{definition}
A polynomial solution of (\ref{Munz1}) and (\ref{Munz2}) with $c$ satisfying (\ref{pm}) is called a \textit{Cartan-M\"unzner polynomial}, or just a \textit{CM-polynomial}.
\end{definition}

Some important  examples of CM-polynomials are listed in Appendix~1 below.
\begin{remark}
The role the condition (\ref{pm}) plays in the above definition is seen  from the following observations. Note that any radially symmetric function $|x|^{2k}$, $k\in \mathbb{N}$ provides a  polynomial satisfying both (\ref{Munz1}) and (\ref{Munz2}) but not (\ref{pm}) because one has $m_-=-1\not\in \mathbb{N}$ in this case. On the other hand,  M\"unzner  proved \cite[p.~65-66]{Mun1} that any non-radially symmetric polynomial $\widetilde{F}$ of degree $\widetilde{m}$ satisfying both (\ref{Munz1}) and (\ref{Munz2}) is either a CM-polynomial, or there holds $\widetilde{F}=\pm(2F^2-|x|^{\widetilde{m}})$ for some CM-polynomial $F$.
The latter relation can also be written as
\begin{equation}\label{Munz3}
\widetilde{F}|_{S^{n-1}}\cong T_2(F|_S^{n-1}),
\end{equation}
where
$$
T_k(t)= \re \bigr(t+i\sqrt{1-t^2}\bigl)^k\equiv \cos (k \cos^{-1}t)
$$
denotes the $k$th Chebyshev polynomial of the first kind: $T_1(t)=t$, $T_2(t)=2t^2-1$, and $T_n(t)=2tT_{n-1}(t)-T_{n-2}(t).$
\end{remark}

Now we return again to solutions of eq. (\ref{Munz1}) alone. In  \cite{TkCartan}, we proved that any cubic solution of (\ref{Munz1}) is either a cubic Cartan-M\"unzner polynomial (\ref{CartanFormula}) or the irreducible cubic form
\begin{equation}\label{rn}
F(x)=x_{n}^3-3x_{n}(x_1^2+\ldots x_{n-1}^2), \quad x\in \R{n}, n\ge 2.
\end{equation}
A similar situation holds true also for  $m=4$. More precisely, any quartic solution $F$ of (\ref{Munz1}) of degree four is either a CM-polynomial or given by
\begin{equation}\label{oddm4}
\begin{split}
F(x)= |V(x)|^4-6|V(x)|^2|V^\bot(x)|^2+|V^\bot(x)|^4
\end{split}
\end{equation}
for some subspace $V\subset \R{n}$, see \cite{TkIso4}. In the latter paper, we observed that both (\ref{rn}) and (\ref{oddm4}) enjoy the common representation
\begin{equation}\label{virtue}
\re (|V(x)|+ |V^\bot(x)|\sqrt{-1})^m=\sum_{k=0}^{[m/2]}(-1)^k\binom{m}{k}|V(x)|^{m-2k} |V^\bot(x)|^{2k}
\end{equation}
for some subspace $V\subset \R{n}$, where $|V(x)|=V(x)$ whenever $\dim V=1$. For instance,  (\ref{virtue}) agrees with (\ref{rn}) for $V=\R{}e_n$. It  is easily verified that any polynomial given by (\ref{virtue}) provides a solution of (\ref{Munz1}). We have conjectured in \cite{TkIso4} that any solution of (\ref{Munz1}) is either a CM-polynomial or a solution given by (\ref{virtue}). The following example shows, however,  that this is not the case.

\begin{example}
Let $F$ be the  Cartan polynomial (\ref{CartanFormula}) in $\R{5}$ and set $F_1(x)=|x|^6-2F^2(x)$. Then $F_1$ is easily shown to satisfy to (\ref{Munz1}) for $m=6$ and $n=5$.  On the other hand, a result of Abresh \cite{Abresh} states that isoparametric hypersurfaces with $m=6$ distinct principal curvatures can exist only in $S^7$ and $S^{13}$, i.e. the corresponding CM-polynomials are defined in $\R{8}$ and $\R{14}$. Thus, $F_1$ cannot be a CM-polynomial. We show that it cannot be also represented by (\ref{virtue}). Indeed, arguing by contradiction, we have that either $-F_1(x)$ or $F_1(x)$ is of the form
\begin{equation*}
\begin{split}
G(x)&:=\re (|V(x)|+ |V^\bot(x)|\sqrt{-1})^6\\
&=32|V(x)|^6-48|x|^2|V(x)|^4+18|x|^4|V(x)|^2-|x|^6
\end{split}
\end{equation*}
for some subspace $V\subset \R{n}$. If $-F_1(x)=G(x)$ then $F^2(x)=|V(x)|^2(16|V(x)|^4-24|x|^2|V(x)|^2+9|x|^4)$  readily yields a contradiction because $F(x)$ is irreducible. On the other hand, if $F_1(x)=G(x)$ then $F^2(x)=(|x|^2-V(x)^2)(|x|^2-4V(x)^2)^2$ yields again the reducibility of $F(x)$, a contradiction follows.

\end{example}

In this paper, we classify all polynomial solutions of (\ref{Munz1}) relating them to the CM-polynomials as primitive building blocks.

\begin{theorem}\label{th1}
For any homogeneous polynomial solution $F$ of $(\ref{Munz1})$ either of the following is true:
\begin{itemize}
\item[(i)]
$m$ is even and $F$ is radially symmetric, i.e. $F(x)\cong  |x|^{m}$;
\item[(ii)]
there exists an integer $p\in \{1,2,3,4,6\}$, $p|m$, and a CM-polynomial $G(x)$ of degree $p$ such that
\begin{equation}\label{composition}
F|_{S^{n-1}}\cong T_{k}\bigl(G|_{S^{n-1}}\bigr), \quad k=\frac{m}{p}.
\end{equation}
\end{itemize}
Moreover, CM-polynomials are primitive in  the sense that if two such polynomials $F$ and $G$ satisfy $(\ref{composition})$ then $F\cong G$.
\end{theorem}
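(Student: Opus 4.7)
The plan is to restrict everything to the unit sphere and reduce the problem to M\"unzner's classical theory of isoparametric hypersurfaces. If $F$ is radially symmetric then necessarily $F\cong|x|^m$ with $m$ even, giving case (i); so suppose $F$ is non-radial and set $f:=F|_{S^{n-1}}$. Homogeneity of $F$ combined with $(\ref{Munz1})$ yields the spherical identity $|\nabla_S f|^2=m^2(1-f^2)$, where $\nabla_S$ denotes the spherical gradient, so $|f|\leq 1$ on $S^{n-1}$ with equality precisely on the critical set $M_{\pm}:=f^{-1}(\pm 1)$, which are proper algebraic subvarieties. Integrating the unit field $\nabla_S f/|\nabla_S f|$ along a unit-speed geodesic $\gamma$ gives $f(\gamma(s))=\cos(ms+\phi_0)$, so each such geodesic oscillates between $M_+$ and $M_-$ with minimal period $2\pi/m$. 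This exhibits $f$ as a transnormal function on $S^{n-1}$ with focal varieties $M_\pm$.

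Because $f$ is polynomial, the transnormal structure is in fact isoparametric: each regular level $f^{-1}(t)$ is a tube around one of the smooth focal submanifolds $M_\pm$, and the tube formula in the round sphere forces the principal curvatures along each such tube to be constant. Hence $\{f^{-1}(t)\cap S^{n-1}:t\in(-1,1)\}$ is a genuine isoparametric family, and by M\"unzner the number $p$ of its distinct principal curvatures satisfies $p\in\{1,2,3,4,6\}$. M\"unzner's existence theorem then supplies a CM-polynomial $G$ of degree $p$ whose level sets on $S^{n-1}$ coincide with those of $f$ (possibly after swapping $M_+\leftrightarrow M_-$) and which satisfies $G(\gamma(s))=\cos(ps+\psi_0)$ along the same normal geodesic. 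Matching the two periodic parametrizations along $\gamma$ forces $p\mid m$; setting $k:=m/p$ and comparing yields $f\equiv\pm T_k(G|_{S^{n-1}})$, which is exactly $(\ref{composition})$.

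For primitivity, suppose $F$ and $G$ are both CM-polynomials of degrees $kp$ and $p$ with $k\geq 2$ and $F|_{S^{n-1}}\cong T_k(G|_{S^{n-1}})$. Lifting the Chebyshev identity homogeneously to $\R{n}$ produces $F=\pm(2G^2-|x|^{2p})$ when $k=2$, which is precisely M\"unzner's exceptional relation $(\ref{Munz3})$; by the Remark following the definition of a CM-polynomial, such $F$ satisfies $(\ref{Munz1})$ and $(\ref{Munz2})$ but fails $(\ref{pm})$ and so is not a CM-polynomial, a contradiction. For $k\geq 3$ one either reduces to the case $k=2$ through the factorization $T_{k\ell}=T_k\circ T_\ell$, or argues directly that the level $\{F|_{S^{n-1}}=-1\}$ contains the regular (codimension one) level $\{G|_{S^{n-1}}=\cos(\pi/k)\}$ of $G$, which pins one of the multiplicities $m_{\pm}^F$ to $1$ and makes $(\ref{pm})$ fail for $F$.

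The hard part will be the passage from the transnormal identity $|\nabla_S f|^2=m^2(1-f^2)$, which follows immediately from $(\ref{Munz1})$, to the full isoparametric property needed to invoke M\"unzner's classification, since $(\ref{Munz1})$ alone gives no direct control over $\Delta F$ and hence over the mean curvatures of the level sets; this passage requires the geometric tube argument together with the regularity of the algebraic focal sets $M_\pm$. Once that geometric step is secured, the existence of the primitive CM-polynomial $G$, the Chebyshev identification $F|_{S^{n-1}}\cong T_k(G|_{S^{n-1}})$, and the primitivity assertion all follow from M\"unzner's classification and a period comparison along a single normal geodesic.
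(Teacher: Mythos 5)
Your overall skeleton matches the paper's: restrict to the sphere to get $|\Nabla f|^2=m^2(1-f^2)$, recognize $f$ as transnormal, pass to an isoparametric function and hence (via M\"unzner) to a CM-polynomial $G$, and then identify the connecting function as a Chebyshev polynomial. Your geodesic-matching argument for $h=\pm T_k$ is an acceptable substitute for the paper's ODE $\frac{m^2}{1-t^2}=\frac{p^2h'(t)^2}{1-h^2(t)}$ combined with the observation that $h^2(\pm1)=1$ (forced because $g^{-1}(\pm1)$ has codimension at least two and so cannot sit inside a regular level of $f$), and your codimension argument for primitivity is a workable variant of the paper's connectedness argument.

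However, there is a genuine gap at the step you yourself flag as ``the hard part.'' You claim that because $f$ is polynomial, each regular level is a tube over the smooth focal sets $M_\pm$ and that ``the tube formula in the round sphere forces the principal curvatures along each such tube to be constant.'' This is false as stated: a tube of constant radius over a smooth submanifold of $S^{n-1}$ does not in general have constant principal curvatures --- the curvatures of the tube at a point corresponding to $(q,\xi)$ depend on the eigenvalues of the shape operator $A_\xi$ of the core submanifold, and these vary with $q$ and $\xi$ unless the core is a focal manifold of an isoparametric family, which is exactly what needs to be proved. The polynomiality of $f$ plays no role in your sketch. This passage from transnormal to isoparametric is precisely the content of Wang's theorem (the paper cites Wang, Thorbergsson and Miyaoka for it), which asserts that any transnormal function on $S^{n-1}$ factors as $h\circ g$ with $g$ isoparametric; the paper uses it as a black box and you cannot replace it by the tube remark. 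Two smaller points: your reduction of the primitivity claim for $k\ge 3$ to $k=2$ via $T_{k\ell}=T_k\circ T_\ell$ is circular, since the intermediate $T_\ell(G)$ need not be a CM-polynomial; and in your direct argument the contradiction is that the focal set $\{F|_{S^{n-1}}=-1\}$ of a CM-polynomial has codimension $m_-+1\ge 2$, so containing a regular (codimension one) level of $G$ forces $m_-=0\notin\mathbb{N}$, not $m_-=1$. The direct argument, so corrected, does work and parallels the paper's use of connectedness of the level sets of a CM-polynomial.
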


\begin{remark}
Notice that  $F(x)=T_k(G(x))$ on $S^{n-1}$ implies  by homogeneity an identity in $\R{n}$
\begin{equation}\label{FG}
F(x)=|x|^m T_k\bigl(G(x)|x|^{-p}\bigr).
\end{equation}
\end{remark}

\begin{remark}
The alternative (i), the cases $p=1$ and $p=2$ in Theorem~\ref{th1} are readily seen yield the above representation (\ref{virtue}) for $(\dim V,m)=(n,m)$, $(\dim V,m)=(1,m)$, and $(\dim V,m)=(s,m/2)$ with $2\le s\le [n/2]$, respectively.
\end{remark}

\bigskip
We demonstrate below how (\ref{composition}) yields the particular cases $m=3$ and $m=4$ discussed above. First observe that for $m=3$, Theorem~\ref{th1} yields (ii) with only possible values  $p=1$ or $p=3$. In the latter case one has the four CM-polynomials (\ref{CartanFormula}), and in the former case one obtains from (\ref{composition}) $F|_{S^{n-1}}\cong T_{3}\bigl(G|_{S^{n-1}}\bigr)$, where $G$ is a CM-polynomial of degree one, i.e. one can assume without loss of generality that $G=x_n$. Since $T_3(t)=4t^3-3t$ we have by lifting (\ref{composition}) in $\R{n}$ that $F(x)\cong |x|^3(4x_n^3-3x_n|x|^2)=x_n(x_n^2-3\sum_{i=1}^{n-1}x_i^2)$, which yields (\ref{rn}).

Now suppose that $m=4$ and $F$ is not radially symmetric. Then the only possible values of $p$ in (ii) are $1$, $2$ and $4$. Again, $p=4$ yields immediately that $F$ is a CM-polynomial. If $p=2$ then $G$ in (\ref{composition}) is a CM-polynomial of degree two, i.e. is given by (\ref{g2}) with $V\subset \R{n}$ satisfying $2\le\dim V\le n-2$. Since $T_2(t)=2t^2-1$ one has by lifting (\ref{composition}) in $\R{n}$ that
$$
F(x)\cong 2G^2(x)-|x|^4=2(|V(x)|^2-|V^\bot(x)|^2)^2-(|V(x)|^2+|V^\bot(x)|^2)^2
$$
which yields exactly the representation given by (\ref{oddm4}). Finally, if $p=1$ then $G=x_n$ and $T_4(t)=8t^4-8t^2+1$ yield
$$
F(x)=8x_n^4-8x_n^2|x|^2+|x|^4\equiv |V(x)|^4-6 |V(x)|^2|V^\bot(x)|^2+|V^\bot(x)|^4
$$
which, again, agrees with (\ref{oddm4}) if one put $V=\R{}e_n$.

\begin{remark}
Our proof of Theorem~\ref{th1} make essentially  use of earlier geometrical results on isoparametric hypersurfaces and transnormal functions  due to M\"unzner and Wang. On the other hand, the assertion  of the theorem is purely algebraic and is essentially equivalent to that any non-radially symmetric polynomial solution of (\ref{Munz1}) is a composition of a Chebyshev polynomial with a solution of (\ref{Munz1}) having the property (\ref{Munz2}) (a primitive solution). It would be interesting to find an independent algebraic proof to this fact.
\end{remark}

\begin{remark}
The alternative (i)  in Theorem~\ref{th1} can also be thought as a composition formula in the spirit of (\ref{composition}) if one assigns $p=2$ to the radially symmetric polynomial $G(x)=|x|^2$ and set $R_k(t)=t^k$ in (\ref{composition}) instead of $T_k$. Interestingly, the appearance of exactly $R_k$ and $T_k$ in our theorem is reminiscent of a well-known result of Ritt \cite{Ritt} that two univariate polynomials commute under composition if and only if, they are, within a linear homeomorphism, either both powers $R_k(t)$, both Chebyshev polynomials $T_k(t)$, or both iterates of the same polynomial. It would be interesting also to learn if this fact has some proper explanation.
\end{remark}

\section{Proof of the main results}

\begin{proof}[Proof of Theorem~\ref{th1}]

Note that (\ref{Munz1}) and Euler's homogeneous function theorem yield
\begin{equation}\label{trans1}
|\Nabla f(x)|^2=|\nabla F(x)|^2-\scal{x}{\nabla F(x)}^2=m^2(1-f(x)^2),
\end{equation}
where  $f(x)=F(x)|_{S^{n-1}}$ and $\Nabla$ stands for the Levi-Chevita connection on $S^{n-1}$.

First notice that an assumption $f\equiv\mathrm{const}$ is equivalent by virtue of (\ref{trans1}) to that $f\equiv 1$ on $S^{n-1}$ which implies $F(x)\equiv |x|^m$, and therefore yields the alternative (i) in Theorem~\ref{th1}.
Therefore we suppose that $f\not\equiv\mathrm{const}$. Then (\ref{trans1}) shows that
$|\Nabla f(x)|^2=a(f)$ with $a(t):=m^2(1-t^2)$ being a smooth function, thus $f(x)$ is a transnormal function on the sphere $S^{n-1}$. By a theorem of Wang \cite{Wang} (see also \cite[p.~966]{Thorb} and \cite{Miyaoka00})  there exists an \textit{isoparametric} function $g(x)$ on $S^{n-1}$ and a real valued function $h$ defined on the image of $g$ such that
\begin{equation}\label{composition1}
f(x)=h(g(x)), \quad x\in S^{n-1}.
\end{equation}
Recall that a function on $S^{n-1}$ is called isoparametric if $|\Nabla f(x)|^2=a(f)$ for some $C^2$-function $b$, and moreover $\underline{\Delta} f=b(f)$ for some function $a$ continuous  on the image $I:=f(S^{n-1})$, where $\underline{\Delta}$ is the Laplace-Beltrami operator on $S^{n-1}$. By a theorem of M\"unzner \cite{Mun1}, any isoparametric function on the sphere is a composition of a univariate function and  (the restriction of) a CM-polynomial, say $G(x)$. Thus, one can without loss of generality assume that $g(x)=G(x)|_S$ in (\ref{composition1}) where $G(x)$ satisfies the Cartan-M\"unzner equations
\begin{equation}\label{Mun2}
\begin{split}
&|\nabla G(x)|^2=p^2|x|^{2p-2}, \qquad \Delta G(x)=c|x|^{p-2},
\end{split}
\end{equation}
where $p\in \{1,2,3,4,6\}$ is the degree of $G$.

Next, notice that (\ref{trans1})  implies  $-1\le f(x)\le 1$ for any $x\in S^{n-1}$  and also that  $f^{-1}(t)$ is a compact submanifold (in fact, a hypersurface) of $S^{n-1}$ for any $t\in (-1,1)$. It is easy to see also that $f^{-1}(t)$ is non-empty for each $t\in [-1,1]$. Indeed, suppose  $x_0\in S^{n-1}$  be a maximum point of $f(x)=F(x)|_{S^{n-1}}$. Then $\nabla F(x_0)=\lambda x_0$ for some real $\lambda$, hence $\lambda^2=m^2$  in virtue of $|x_0|=1$ and (\ref{Munz1}), and, moreover, applying the Euler homogeneous function theorem one has $mf(x_0)=mF(x_0)=\scal{\nabla F(x_0)}{x_0}=\lambda$. It follows that $f(x_0)=\mathrm{sign}\, \lambda$. If $f(x_0)=-1$ then $f(x)\equiv -1$ on $S^{n-1}$ and therefore $F(x)\equiv |x|^{m}$ in $\R{n}$, a contradiction. Thus, $\max_{x\in S^{n-1}}f(x)=1$. A similar argument shows that $\min_{x\in S^{n-1}}f(x)=-1$ which proves the claim.

Clearly, the above argument is also valid for $g(x)$. In fact,  M\"unzner  \cite[p.65-67]{Mun1} showed that $g^{-1}(r)$ is  a connected compact hypersurface of $S^{n-1}$ for any $r\in (-1,1)$ and  $g^{-1}(\pm1)$ are (connected) smooth submanifolds of codimension at least two.

Now observe that the first equation in (\ref{Mun2}) yields  $|\Nabla g(x)|^2=p^2(1-g(x)^2)$, therefore we obtain by (\ref{composition1})
\begin{equation}\label{rer}
|\Nabla f(x)|^2=h'(g(x))^2\cdot |\Nabla g(x)|^2=p^2|h'(g(x))|^2(1-g(x)^2),
\end{equation}
and taking into account (\ref{trans1}) and (\ref{composition1}), we arrive at
\begin{equation}\label{diffeq}
\frac{m^2}{1-t^2}=\frac{p^2h'(t)^2}{1-h^2(t)}.
\end{equation}
Since both $g$ and $f$ are real analytic functions (the restrictions of polynomials on the sphere), $h(t)$ is also a real analytic function which yields by (\ref{diffeq})
\begin{equation}\label{htt}
h(t)=\cos (\frac{m}{p}\arccos t +C), \quad t\in [-1,1]
\end{equation}
for some real $C$.

We claim that $h^2(\pm1)=1$. Indeed, suppose for instance that $s:=h(1)\ne \pm1$. Then by the above, $f^{-1}(s)$ is a compact hypersurface of $S^{n-1}$. On the other hand, (\ref{composition1}) and (\ref{htt}) yield
$$
f^{-1}(s)=\bigsqcup_{r\in R} g^{-1}(r),
$$
where $R$ is the (finite) set of solutions $h(r)=s$. But $1\in R$, hence $f^{-1}(s)$ contains a disjoint component $g^{-1}(1)$ of codimension at least 2, a contradiction.

In particular, we have $0=1-h^2(1)=\sin^2 C$ which yields $C=\pi l$ for some integer $l$. Similarly, $0=1-h^2(-1)=\sin^2(\frac{m}{p}\pi+\pi l)$ which immediately implies that $k:=\frac{m}{p}$ is integer and that $h(t)=\cos (\arccos t +\pi l)=(-1)^l T_k(t)$ is a Chebyshev polynomial.

It remains only to verify the last claim of the theorem. To this end we notice that if $F$ in (\ref{composition}) is a CM-polynomial then for any $t\in (-1,1)$ the level sets $f^{-1}(t)$ are connected isoparametric hypersurfaces with $m=\deg F$ distinct principal curvatures. On the other yields (\ref{composition}) shows that $f^{-1}(t)=g^{-1}\circ T_k^{-1}(t)$ is a disjoint union of $\deg T_k=k$ connected components consisting of level sets of $g$ (observe that $G$ is a CM-polynomial too), thus $k=1$ and $T_1(t)=t$. The theorem is proved completely.

\end{proof}

\bigskip

\section{Appendix. Some examples of CM-polynomials}

Recall that a CM-polynomial can have only degree $m=1,2,3,4$ or 6. We shortly outline all  so far known CM-polynomials below (the only remained unsettled case is in dimension $n=32$ of degree $m=4$), see  the recent lection notes \cite{ChStory} for a full story. CM-polynomial of degrees $m=1,2,3$ are the linear forms $F=\scal{x}{e}$ (with $e\in \R{n}$ being a unit vector), quadratic forms  (\ref{g2}) for some subspace $V\subset\R{n}$ of dimension $2\le \dim V\le [n/2]$, and Cartan cubics (\ref{CartanFormula}), respectively.

According to the recent classification \cite{CJ}, \cite{Ch1}, \cite{Ch2}   any CM-polynomial of degree four in dimension $n\ne 32$ is  either one of the constructed in \cite{OT1}, \cite{OT2}, \cite{FKM} and given explicitly by
$$
F(x)=|x|^4-2\sum_{i=0}^{s}(x^\TOP A_i x)^2,
$$
where $\{A_{i}\}_{0\le i\le s}$ is a system of symmetric endomorphisms of $\R{n}$ satisfying
$A_i A_j+A_jA_i=2\delta_{ij}\cdot \mathbf{1}_{\R{n}}$, or one of the following two quartic forms
$$
F_d(x)=\frac{1}{2}(\trace (Z\bar Z)^2-\frac{3}{8}(\trace Z\bar Z)^2),\quad d=1,2,
$$
where the matrix 
$$
Z=\left(
    \begin{array}{ccccc}
      0 & z_1 & z_2 & z_3 & z_4 \\
      -z_1 & 0 & z_5 & z_6 & z_7 \\
      -z_2 & -z_5 & 0 & z_8 & z_9 \\
      -z_3 & -z_6 & -z_8 & 0 & z_{10} \\
      -z_4 & -z_7 & -z_9 & -z_{10} & 0 \\
    \end{array}
  \right)
$$
has entries $z_k=x_k$ if $d=1$ and $z_k=x_k+ix_{10+k}$ if $d=2$, and $1\le k\le 10$. 

Finally, there are two (harmonic) CM-polynomials of degree $m=6$, each one in dimensions $n=8$ and $n=14$, \cite{Abresh}, \cite{DorfNehar85}, \cite{Miyaoka12}. 

\bigskip

\textbf{Acknowledgments.} 
The author thanks Professor Tang Zizhou for bringing his attention to the paper  \cite{Wang} of Qi~Ming~Wang.

\bibliographystyle{amsalpha}
\bibliography{main_references}

\end{document}